\begin{document}
\subjclass[2010]{53B40}
\keywords{Finsler manifold, metrizability, affine transformation, isometry, holonomy}
\author{D.~Cs.~Kert\'esz}
\title{Affinely rigid Finsler manifolds}
\begin{abstract}
A Finsler function $F$ is \emph{affinely rigid} if its canonical spray is uniquely metrizable, in the sense that if $\bar F$ is another Finsler function whose canonical spray is $S$, then $d(F/\bar F)=0$. In this short note we explore some sufficient conditions for a Finsler function to be affinely rigid, and discuss open problems.
\end{abstract}
\maketitle

\newtheorem{thm}{Theorem}
\newtheorem{lmm}[thm]{Lemma}
\newtheorem{crl}[thm]{Corollary}
\newtheorem{prp}[thm]{Proposition}

\theoremstyle{definition}
\newtheorem{rmrk}[thm]{Remark}

\newcommand{\vv}{\mathbf{v}}
\newcommand{\lsz}{\upharpoonright}
\newcommand{\NN}{\mathbb{N}}
\newcommand{\RR}{\mathbb{R}}
\newcommand{\vl}{^\mathsf{v}}
\newcommand{\tl}{^\mathsf{c}}
\newcommand{\slt}{\mathring{T}}
\newcommand{\vm}{\mathfrak{X}}
\newcommand{\hl}{^\mathsf{h}}
\newcommand{\ir}[1]{\mathcal{#1}}
\newcommand{\hll}[1]{\widetilde{#1}}
\newcommand{\prcf}[3]{
\frac{\partial #1}{\partial #2^{#3}}
}
\newcommand{\ic}[2]{
^{#1}_{#2}
}
\newcommand{\inv}{^{-1}}
\newcommand{\fels}[5]{#1#2#3{#4}\dots#4#1#2{#5}}

\section{Motivation}
The problem of affine rigidity rises naturally when one attempts to examine the relation between affine\footnote{preserves geodesics as parametrized curves} and isometric\footnote{preserves the Finsler function} transformations of Finsler manifolds; in particular, in the problem of characterizing Finsler manifolds that admit no proper affine transformations. As to the Riemannian case, it is well-known, that the isometry and affinity groups of a complete irreducible Riemannian manifold coincide, except for the one-dimensional Euclidean space \cite{MR0076395}. Most of the steps of the proof can be translated to a Finsler manifold, with one crucial exception: \emph{the irreducibility implies that the holonomy group uniquely determines the inner products on the tangent spaces up to a constant factor}. This proposition does not have a clear analogue in Finsler geometry, as there is no notion of irreducibility. So the problem rises: find characterizations of Finsler manifolds, whose `holonomy structure' determines the Finsler function, up to a constant factor. Since the `holonomy structure' is described by the canonical spray determined by the Finsler function, we arrive to the question in the abstract.

We may construct the holonomy group of a Finsler manifold $(M,F)$, analogously to that of a Riemannian manifold, using the parallel translation with respect to the Berwald connection. In this way, for a fixed point $p\in M$, we obtain a subgroup $\operatorname{Hol}_p$ of the group of smooth diffeomorphisms of $T_pM\setminus\{0\}$. Each element of $\operatorname{Hol}_p$ is a $1^+$-homogeneous diffeomorphism of $T_pM\setminus\{0\}$ , and it preserves the Finsler function. For a connected Finsler manifold, $\operatorname{Hol}_p$ and $\operatorname{Hol}_q$ are of course isomorphic for any $p$ and $q$ in $M$, so we may speak of \emph{the} holonomy group of a (connected) Finsler manifold. These groups can be vastly different from the holonomy groups of Riemannian manifolds, as they can be infinite-dimensional (see, e.g., \cite{MR2917275}). Non-Berwald Landsberg manifolds have non-Riemannian, but finite dimensional holonomy groups \cite{MR2066448}, however, it is still not known whether such Finsler manifolds exist.

The following observation is immediate:
\begin{prp}\label{prp:trnzero}
 Let $(M,F)$ be a Finsler manifold. If $\operatorname{Hol}_p$ acts transitively on the unit sphere $U(T_pM):=\{v\in T_pM\mid F(v)=1\}$, then $(M,F)$ is affinely rigid.
\end{prp}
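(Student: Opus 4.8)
The plan is to exploit that the Berwald parallel translation---hence the holonomy group $\operatorname{Hol}_p$---is manufactured from the canonical spray alone, so it is a shared object of $F$ and of any competing metrizer. Let $\bar F$ be a Finsler function whose canonical spray is also $S$. Because the Berwald connection, its parallel translations, and the group $\operatorname{Hol}_p$ depend only on $S$, the functions $F$ and $\bar F$ induce literally the same holonomy group acting on $T_pM\setminus\{0\}$. The decisive point is that this common parallel translation preserves $\bar F$ just as it preserves $F$: each is horizontally constant for the Berwald connection of its own canonical spray, and these connections coincide. Thus every $\phi\in\operatorname{Hol}_p$ satisfies $F\circ\phi=F$ and $\bar F\circ\phi=\bar F$ on $T_pM\setminus\{0\}$.

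First I would analyse the ratio inside a single fibre. Set $g:=F/\bar F$, a smooth positive function on $T_pM\setminus\{0\}$. Since $F$ and $\bar F$ are positively $1$-homogeneous, $g$ is positively $0$-homogeneous; as every nonzero $v$ equals $F(v)\,\bigl(v/F(v)\bigr)$ with $v/F(v)\in U(T_pM)$, the function $g$ is determined by its restriction to $U(T_pM)$. From $F\circ\phi=F$ and $\bar F\circ\phi=\bar F$ we obtain $g\circ\phi=g$ for every $\phi\in\operatorname{Hol}_p$, so $g$ is constant on each $\operatorname{Hol}_p$-orbit. By hypothesis $\operatorname{Hol}_p$ acts transitively on $U(T_pM)$; hence $g$ is constant on $U(T_pM)$, and by $0$-homogeneity $g$ is constant on the whole fibre $T_pM\setminus\{0\}$.

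It remains to spread this fibrewise constant over the entire slit tangent bundle. As $F$ and $\bar F$ metrize the same spray, both are horizontally constant for the common Berwald connection, i.e.\ $d\hl F=0$ and $d\hl\bar F=0$, whence
\[
 d\hl g=\frac{\bar F\,d\hl F-F\,d\hl\bar F}{\bar F^{2}}=0 .
\]
Therefore $g$ is constant along every parallel-translated (horizontal) curve. On connected $M$ each nonzero vector over an arbitrary point $q$ is the parallel translate of a nonzero vector over $p$; transporting along a connecting curve and invoking $d\hl g=0$ shows that $g$ assumes over $q$ the same constant value it already has on $T_pM\setminus\{0\}$. Consequently $g=F/\bar F$ is globally constant, so $d(F/\bar F)=0$ and $(M,F)$ is affinely rigid.

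The single step that genuinely needs justification---and the conceptual core of the proof---is the assertion that the parallel translation determined by $S$ preserves $\bar F$ as well, equivalently that $d\hl\bar F=0$ with $d\hl$ read off from $S$. This is precisely the statement that horizontal constancy of a Finsler function is a property of its canonical spray and not of the function in isolation: since $S$ is assumed to be the canonical spray of $\bar F$, the horizontal distribution determined by $S$ coincides with that of $\bar F$'s Berwald connection, along which $\bar F$ is constant. Everything else reduces to the elementary homogeneity-and-orbit bookkeeping carried out above.
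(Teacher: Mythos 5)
Your proof is correct, and it is precisely the argument the paper treats as immediate (the paper gives no written proof, having just noted that holonomy is built from the Berwald parallel translation and preserves the Finsler function): since the spray alone determines the connection, every $\phi\in\operatorname{Hol}_p$ preserves both $F$ and $\bar F$, transitivity forces the $0$-homogeneous ratio $F/\bar F$ to be constant on the fibre at $p$, and $d\hl(F/\bar F)=0$ spreads that constant over the manifold. The only point worth flagging is that your final propagation step uses connectedness of $M$, which the proposition does not state but the paper assumes implicitly throughout (cf.\ its discussion of $\operatorname{Hol}_p\cong\operatorname{Hol}_q$ and the hypotheses of the final theorem); for a disconnected manifold one would need transitivity at a point of each component.
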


Irreducible Riemannian manifolds are affinely rigid. By Berger's holonomy theorem \cite{MR0079806,MR2150392} there are irreducible Riemannian manifolds whose holonomy groups are not transitive on the unit sphere, which implies that the reverse of Proposition~\ref{prp:trnzero} is not true, and the transitivity of $\operatorname{Hol}_p$ should be replaced by a weaker condition.

It is worth noting that Riemannian manifolds are `much more rigid' than Finsler manifolds. In the Riemannian case, the norms on the tangent spaces are required to be quadratic functions, thus they are uniquely determined by their Hessian at any point. Finsler functions allow much more freedom, therefore characterizing affinely rigid Finsler manifolds is expected to be more difficult than the Riemannian ones.

\section{Preliminaries}
Let $(M,F)$ be a Finsler manifold. We denote by $\mathring\tau\colon\slt M\to M$ the slit tangent bundle, and $V\slt M:=\ker\mathring\tau_*$ is the vertical subbundle of $T\slt M$. There is a unique subbundle $H\slt M$ of $T\slt M$ satisfying the following properties:
\begin{enumerate}
 \item $V\slt M\oplus H\slt M = T\slt M$.
 \item For a vector field $X\in\vm(M)$, denote by $X\hl$ the unique smooth section of $H\slt M$ which is $\mathring\tau$ related to $X$. Then $X\hl$ is $1^+$-homogeneous, i.e., $[C,X\hl]=0$ where $C$ is the Liouville vector field;
 \item For all $X,Y\in\vm(M)$, $[X\hl,Y\vl]-[Y\hl,X\vl]=[X,Y]\vl$ (the torsion vanishes).
 \item $H\slt M\subset\ker dF$.
\end{enumerate}
For details, we refer to \cite{CSF}.

\section{Subspace fields}

We recall a few concepts and results about subspace fields from \cite{Michor}. In the cited reference, they are called singular distributions. Given a manifold $M$, we denote by $\vm_{loc}(M)$ the set of vector fields that are defined only on an open subset of $M$. Suppose that on a manifold $M$, for each $p\in M$ we have a subspace $E_p$ of $T_pM$. Then the disjoint union $E=\bigsqcup_{p\in M} E_p$ is a \emph{subspace field} on $M$. We denote by $\vm_E$ the (smooth by assumption) local vector fields in $\vm_{loc}(M)$, that take values only in $E$. We say that a subset $\ir V$ of $\vm_E$ \emph{spans} $E$, if at each $p\in M$, $E_p$ is the linear span of $\{X(p)\in T_pM\mid X\in \ir V\}$. Here we agree that the linear span of the empty set is the zero element of the vector space. We say that $E$ is \emph{smooth}, if it is spanned by $\vm_E$.

An \emph{integral manifold} of a smooth subspace field $E$ is an immersed submanifold $N$ with immersion $i\colon N\to M$, such that $i_*(T_pN)=E_{i(p)}$ for all $p\in N$. It turns out that these integral manifolds are actually initial submanifolds, so we need not to specify the immersion $i$.

A subset $\ir V$ of $\vm_{loc}(M)$ is \emph{stable}, if for any $X,Y\in\ir V$, the local vector field $(\operatorname{FL}\ic Xt)_\#(Y)$\footnote{here $\operatorname{Fl}^X$ denotes the flow of $X$, and `\#' stands for push-forward, e.g., $\varphi_\# X = \varphi_*\circ X\circ\varphi\inv$} is also in $\ir V$.
For a set $\ir W\subset \vm_{loc}(M)$, $\ir S(\ir W)$ denotes the set of local vector fields of the form
\[
 (\operatorname{Fl}\ic{X_1}{t_1}\circ \dots \circ \operatorname{Fl}\ic{X_k}{t_k})_\# Y,
\]
where $\fels X_1,k,Y\in \ir W$, $k\in \NN$. Then $\ir S(\ir W)$ is the smallest stable subset of $\vm_{loc}(M)$ that contains $\ir W$. According to  \cite[3.24~Lemma]{Michor}, the smooth subspace field $E$ spanned by $\ir S(\ir W)$ is \emph{integrable}, in the sense that any point of $M$ is contained in an integral manifold of $E$.

The following observation is from \cite{MR2886203}.

\begin{lmm}\label{lmm:sdrank}
 If $E$ is a smooth subspace field, then the function $p\mapsto \dim E_p$ is lower semi-continuous. 
\end{lmm}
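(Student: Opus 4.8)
The plan is to establish lower semi-continuity directly from the definition: for each $p_0 \in M$ I would produce a neighborhood $U$ of $p_0$ on which $\dim E_p \geq \dim E_{p_0}$, which is exactly the assertion.

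First I would set $k := \dim E_{p_0}$ and exploit the smoothness hypothesis. Since $E$ is smooth, it is spanned by $\vm_E$, so $E_{p_0}$ is the linear span of $\{X(p_0) \mid X \in \vm_E\}$. As this span has dimension $k$, I can select local vector fields $X_1, \dots, X_k \in \vm_E$, all defined on a common open neighborhood $U_0$ of $p_0$, whose values $X_1(p_0), \dots, X_k(p_0)$ form a basis of $E_{p_0}$. This initial selection is the only place where smoothness is genuinely used, and it is also the step one must be most careful about: without it, the fibre $E_{p_0}$ need not be reachable by smooth local sections at all, and the argument would break down.

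The second step is to propagate linear independence to nearby points, which is a routine openness argument. Working in a chart around $p_0$, I write $X_i = \sum_j a_i^j \partial_j$ with smooth coefficient functions $a_i^j$. Linear independence of $X_1(p_0), \dots, X_k(p_0)$ means the $k \times n$ matrix $\bigl(a_i^j(p_0)\bigr)$ has rank $k$, so some $k \times k$ minor is nonzero at $p_0$. Since that minor is a smooth (in fact polynomial) function of the $a_i^j$, hence of $p$, it stays nonzero on some smaller neighborhood $U \subset U_0$, whence $X_1(p), \dots, X_k(p)$ are linearly independent for every $p \in U$.

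Finally I would conclude: because each $X_i$ takes values in $E$, we have $X_i(p) \in E_p$ for all $p \in U$, so $E_p$ contains $k$ linearly independent vectors and therefore $\dim E_p \geq k = \dim E_{p_0}$ on $U$. Since $p_0$ was arbitrary, the dimension function is lower semi-continuous. I do not anticipate any serious obstacle; the content of the lemma is precisely that smoothness converts the pointwise spanning condition into the existence of locally non-degenerate frames, and the rest is the standard fact that the non-vanishing of a minor is an open condition.
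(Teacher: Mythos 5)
Your proof is correct: extracting $X_1,\dots,X_k\in\vm_E$ with linearly independent values at $p_0$, restricting to a common domain, and using the fact that a nonvanishing $k\times k$ minor is an open condition is exactly the standard argument, and it handles the degenerate case $\dim E_{p_0}=0$ trivially. Note that the paper itself offers no proof of this lemma---it is quoted from the reference of Drager, Lee, Park and Richardson---so your write-up supplies the expected argument rather than deviating from one in the text.
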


\section{Some sufficient conditions for affine rigidity}

We are going to study $\ir S(\vm_{H\slt M})$ and the smooth subspace field $\ir D^h$ spanned by it.

\begin{lmm}
 $\ir D^h\subset\ker(dF)$.
\end{lmm}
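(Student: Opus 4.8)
The plan is to prove the stronger pointwise statement that \emph{every} local vector field belonging to $\ir S(\vm_{H\slt M})$ takes its values in $\ker(dF)$. Since $\ir D^h$ is by definition spanned by such fields and each fibre $\ker(dF_p)$ is a linear subspace, the inclusion $\ir D^h\subset\ker(dF)$ is then immediate: the pointwise span of vectors lying in $\ker(dF_p)$ again lies in $\ker(dF_p)$.

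First I would record the base case, which is exactly property (4) of the Preliminaries: any $Y\in\vm_{H\slt M}$ is valued in $H\slt M\subset\ker(dF)$, so that $dF(Y)=YF=0$. It then remains to verify that the push-forward operations appearing in the definition of $\ir S$ preserve the property of being valued in $\ker(dF)$.

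The crucial observation I would isolate is that the flow of a horizontal vector field leaves $F$ invariant. Indeed, for $X\in\vm_{H\slt M}$ we have $XF=dF(X)=0$, hence $\frac{d}{dt}\bigl(F\circ\operatorname{Fl}\ic{X}{t}\bigr)=(XF)\circ\operatorname{Fl}\ic{X}{t}=0$ and therefore $F\circ\operatorname{Fl}\ic{X}{t}=F$ throughout the domain of the flow. Composing, for $\varphi:=\operatorname{Fl}\ic{X_1}{t_1}\circ\dots\circ\operatorname{Fl}\ic{X_k}{t_k}$ with all $X_i$ horizontal I would conclude $F\circ\varphi=F$, and so $\varphi^*(dF)=dF$. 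With this in hand the computation is routine: for $Y\in\vm_{H\slt M}$ and $Z:=\varphi_\# Y=\varphi_*\circ Y\circ\varphi\inv$, at each point $q$ of the domain
\[
dF_q\bigl(Z(q)\bigr)=dF_q\bigl(\varphi_*\,Y(\varphi\inv(q))\bigr)=dF_{\varphi\inv(q)}\bigl(Y(\varphi\inv(q))\bigr)=(YF)(\varphi\inv(q))=0,
\]
where the middle equality is the invariance $\varphi^*(dF)=dF$. Thus every generator of $\ir S(\vm_{H\slt M})$ is valued in $\ker(dF)$, which completes the argument.

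I do not anticipate a genuine obstacle here: the whole statement reduces to property (4) together with the elementary fact that horizontal flows preserve $F$. The only point deserving a line of care is the identity $\varphi^*(dF)=dF$, i.e.\ that $F$-invariance of the diffeomorphism $\varphi$ is inherited from the $F$-invariance of each individual flow; everything else is a direct unwinding of the definitions of $\vm_{H\slt M}$ and $\ir S$.
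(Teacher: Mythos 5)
Your proposal is correct and follows essentially the same route as the paper: both rest on property (4) plus the observation that the flow of a vector field annihilating $F$ preserves $F$, so that push-forwards along such flows again annihilate $F$. The only cosmetic difference is that you treat a general composite $\varphi=\operatorname{Fl}\ic{X_1}{t_1}\circ\dots\circ\operatorname{Fl}\ic{X_k}{t_k}$ at once via $\varphi^*(dF)=dF$, while the paper verifies the single-flow step and iterates implicitly.
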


\begin{proof}
 We know that $H\slt M\subset\ker dF$. So it suffices to show that if $\xi$ and $\eta$ are vector fields on $\slt M$ satisfying $\eta F = \xi F =0$, then we also have $(\operatorname{Fl}^\xi_t{}_\#\eta) F=0$.
 
 Let $c$ be an integral curve of $\xi$. Then $F$ is constant along $c$:
 \[
  (F\circ c)'(t) = \dot c(t) F = \xi(c(t))F=0.
 \]
 This implies that $F\circ \operatorname{Fl}_t^\xi = F$, wherever both sides are defined. Then
 \[
  (\operatorname{Fl}^\xi_t{}_\#\eta) F = (\eta (F\circ \operatorname{Fl}^\xi_t))\circ \operatorname{Fl}^\xi_{-t}=(\eta F)\circ \operatorname{Fl}^\xi_{-t}=0.\qedhere
 \]
\end{proof}

\begin{crl}\label{crl:imDh}
 The Finsler function $F$ is constant on the connected integral manifolds of $\ir D^h$.
\end{crl}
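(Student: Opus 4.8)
The plan is to read this off immediately from the preceding lemma $\ir D^h\subset\ker(dF)$ together with the definition of an integral manifold. First I would let $N$ be a connected integral manifold of $\ir D^h$, with immersion $i\colon N\to\slt M$, so that by definition $i_*(T_pN)=\ir D^h_{i(p)}$ for every $p\in N$. I would then pull $F$ back along $i$ and study the smooth function $F\circ i\colon N\to\RR$. Computing its differential by the chain rule, for any $p\in N$ and any $v\in T_pN$ I get
\[
 d(F\circ i)_p(v)=dF_{i(p)}(i_*v),
\]
and since $i_*v\in i_*(T_pN)=\ir D^h_{i(p)}\subset\ker(dF)_{i(p)}$ by the lemma, this vanishes. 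Hence $d(F\circ i)=0$ identically on $N$, and because $N$ is connected, $F\circ i$ is constant. This is exactly the assertion that $F$ is constant on the integral manifold $N$.

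The only point that deserves a word of care is what ``constant on the integral manifold'' should mean for an immersed (as opposed to embedded) submanifold; I avoid any ambiguity by phrasing everything through the pulled-back function $F\circ i$, and this is further legitimized by the remark in the Preliminaries that the integral manifolds of a smooth subspace field are in fact initial submanifolds. Beyond this bookkeeping I expect no genuine obstacle: once the inclusion $\ir D^h\subset\ker(dF)$ is available, the corollary is a one-line consequence of the chain rule and the connectedness of the leaf.
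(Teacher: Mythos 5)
Your proof is correct and coincides with the paper's reasoning: the paper states this as an immediate corollary of the preceding lemma $\ir D^h\subset\ker(dF)$ and gives no written proof, the intended argument being exactly your chain-rule computation $d(F\circ i)_p(v)=dF_{i(p)}(i_*v)=0$ followed by connectedness of $N$. Your care in phrasing constancy via the pullback $F\circ i$ along the immersion is a sound way to handle the immersed-submanifold subtlety.
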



%
%
%
%


\begin{prp}\label{prp:rankrigid}
 If $\ir D^h$ has dimension $2n-1$ over a dense subset of $\slt M$, then $(M,F)$ is affinely rigid.
\end{prp}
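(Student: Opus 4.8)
The plan is to exploit that the horizontal subbundle $H\slt M$, and hence the subspace field $\ir{D}^h$, is an invariant of the canonical spray alone: conditions (1)--(3) characterize the homogeneous, torsion-free Ehresmann connection attached to a spray, while condition (4) is exactly the compatibility expressing that $S$ is the geodesic spray of $F$. Consequently, if $\bar F$ is any Finsler function whose canonical spray is also $S$, then $\bar F$ shares the very same $H\slt M$ and satisfies $H\slt M\subset\ker d\bar F$. Running the preceding Lemma and Corollary~\ref{crl:imDh} verbatim with $\bar F$ in place of $F$ then shows that $\bar F$, too, is constant on the connected integral manifolds of $\ir{D}^h$; equivalently $\ir{D}^h\subset\ker dF\cap\ker d\bar F$. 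I expect this first step to be the main obstacle, as it is the only place where genuine Finsler-geometric input (invariance of the canonical nonlinear connection under the spray, and automatic validity of property (4) for every metrization of $S$) is needed; everything afterwards is linear algebra together with a homogeneity trick.

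Next I would localize where the rank hypothesis takes effect. Since $F$ is positive and $1^+$-homogeneous, Euler's relation gives $dF(C)=F\neq 0$ for the Liouville field $C$, so $dF$ never vanishes and $\ker dF$ has constant rank $2n-1$; the same holds for $\bar F$. Because $\ir{D}^h\subset\ker dF$, the rank of $\ir{D}^h$ is at most $2n-1$ everywhere, and by Lemma~\ref{lmm:sdrank} the set $\ir U:=\{\,p\in\slt M\mid\dim\ir{D}^h_p=2n-1\,\}$ is open. The hypothesis asserts that $\ir U$ is dense, hence dense and open. On $\ir U$ the inclusions $\ir{D}^h\subset\ker dF$ and $\ir{D}^h\subset\ker d\bar F$ become equalities of $(2n-1)$-dimensional spaces, so $\ker dF=\ker d\bar F=\ir{D}^h$ throughout $\ir U$.

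Finally, equal kernels of nowhere-vanishing $1$-forms are proportional, so there is a smooth function $\mu$ on $\ir U$ with $d\bar F=\mu\,dF$, and homogeneity closes the argument: evaluating on $C$ and using $dF(C)=F$, $d\bar F(C)=\bar F$ yields $\mu=\bar F/F$, whence
\[
 d\!\left(\frac{F}{\bar F}\right)=\frac{\bar F\,dF-F\,d\bar F}{\bar F^{2}}=\frac{\bar F-\mu F}{\bar F^{2}}\,dF=0
\]
on $\ir U$. Since $d(F/\bar F)$ is a smooth $1$-form that vanishes on the dense set $\ir U$, it vanishes on all of $\slt M$ by continuity, which is precisely the assertion that $(M,F)$ is affinely rigid.
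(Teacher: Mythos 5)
Your proof is correct, and while it opens with the same key step as the paper, its second half takes a genuinely different and in fact more elementary route. Both arguments hinge on the same Finsler-geometric input, which the paper leaves implicit in the sentence ``$\bar F$ is also constant on $N$'': the canonical nonlinear connection depends only on the canonical spray, so any metrization $\bar F$ of $S$ also satisfies $H\slt M\subset\ker d\bar F$, whence $\ir D^h\subset\ker dF\cap\ker d\bar F$ by rerunning the lemma. From there the paper goes through integral manifolds: it uses the integrability of the subspace field spanned by $\ir S(\vm_{H\slt M})$ (Michor's Lemma 3.24), observes that at a point $v$ of maximal rank the $(2n-1)$-dimensional integral manifold $N$ through $v$ may be taken to be an open submanifold of the level set $U(TM)$, concludes $d(\bar F\lsz U(TM))_v=0$ on a dense subset of $U(TM)$, and finishes with a one-line appeal to homogeneity. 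You bypass integrability altogether: on the set $\ir U$ where $\dim\ir D^h=2n-1$ (open by Lemma~\ref{lmm:sdrank}, dense by hypothesis, and $2n-1$ is indeed the maximal possible rank since $dF(C)=F\neq 0$ forces $\ir D^h\subset\ker dF$ to have dimension at most $2n-1$), the inclusions become the equalities $\ker dF=\ker d\bar F=\ir D^h$, so $d\bar F=(\bar F/F)\,dF$ by Euler's relation, and a direct computation gives $d(F/\bar F)=0$ on $\ir U$, hence everywhere by density and continuity. Your version trades the paper's global geometric step (integral manifolds and their initial-submanifold subtleties) for pointwise linear algebra on hyperplane kernels, and it has the additional merit of making fully explicit the homogeneity argument that the paper compresses into its final sentence. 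One small imprecision worth flagging: conditions (1)--(3) of the paper alone do not single out a horizontal bundle (they hold for the Crampin--Grifone connection of \emph{every} spray); the statement you actually use, which is correct and standard, is that the canonical connection of a Finsler function is the Crampin--Grifone connection of its canonical spray --- hence determined by $S$ alone --- while condition (4) then holds automatically for every Finsler metrization of $S$. This does not affect the validity of your proof.
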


\begin{proof}
Let $\bar F$ be a Finsler function for $M$ which has the same canonical spray as $F$. Fix a point $v\in\slt M$ such that $\ir D^h_v$ has dimension $2n-1$. Without loss of generality, we may assume that $F(v)=1$, and hence $v\in U(TM):=F\inv\{1\}$. By Lemma~\ref{lmm:sdrank}, $\ir D^h$ has dimension $2n-1$ on an open neighbourhood of $v$. Also, $v$ is contained in an integral manifold $N$ of $\ir D^h$, which has dimension $2n-1$. Since $F$ is constant on $N$ by Corollary~\ref{crl:imDh}, we can assume that $N$ is an open submanifold of $U(TM)$. However, $\bar F$ is also constant on $N$, thus $d(\bar F\lsz U(TM))_v=d(\bar F\lsz N)_v = 0$. Such points $v$ in $U(TM)$ form a dense set, therefore $d(\bar F\lsz U(TM))=0$. This, and the homogeneity of $F$ and $\bar F$ implies that $d(F/\bar F)=0$.
%
%
%
%
%
%
%
\end{proof}

\begin{prp}
If $U(TM):=F\inv(\{1\})$ contains countably many maximal integral manifolds of $\ir D^h$, then $(M,F)$ is affinely rigid.
\end{prp}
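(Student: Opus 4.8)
The plan is to reduce to the situation of Proposition~\ref{prp:rankrigid} by showing that the countability hypothesis forces the top-dimensional locus of $\ir D^h$ to be dense in $U(TM)$, and then to repeat the concluding argument of that proof, now carried out on $U(TM)$ instead of on $\slt M$.

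First I would record two facts that hold exactly as in the previous proof. Since the horizontal subbundle $H\slt M$ is determined by the canonical spray, a competing Finsler function $\bar F$ with the same spray has the same $H\slt M$, hence the same subspace field $\ir D^h$; by property (4) of the preliminaries applied to $\bar F$ we have $\ir D^h\subset\ker(d\bar F)$, so Corollary~\ref{crl:imDh} is valid for $\bar F$ as well, and both $F$ and $\bar F$ are constant on the connected integral manifolds of $\ir D^h$. Second, homogeneity (Euler's relation $dF(C)=F$) shows that $dF$ never vanishes on $\slt M$, so $U(TM)=F\inv(\{1\})$ is a regular hypersurface of dimension $2n-1$ with $T_vU(TM)=\ker(dF)_v$; because $\ir D^h\subset\ker dF$, every integral manifold through a point of $U(TM)$ is an immersed submanifold of $U(TM)$.

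The heart of the argument is the claim that $A:=\{v\in U(TM)\mid\dim\ir D^h_v=2n-1\}$ is dense in $U(TM)$; note that $A$ is open by Lemma~\ref{lmm:sdrank}, since $2n-1$ is the maximal value of $\dim\ir D^h$ attained on $U(TM)$. Suppose $A$ were not dense. Then there is a nonempty open set $W\subset U(TM)$ on which $\dim\ir D^h\le 2n-2$ everywhere. Writing $U(TM)=\bigcup_{i\in\NN}N_i$ for the countable family of maximal integral manifolds supplied by the hypothesis, every $N_i$ meeting $W$ has dimension at most $2n-2$, because the dimension of a connected integral manifold is constant and equals $\dim\ir D^h$ along it. Hence each $N_i\cap W$ is an immersed submanifold of $W$ of dimension strictly less than $\dim W=2n-1$, and therefore has measure zero in $W$. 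Since $W\subset\bigcup_i(N_i\cap W)$ is then a countable union of null sets, $W$ itself would be null, contradicting that $W$ is a nonempty open subset of a $(2n-1)$-manifold. Thus $A$ is dense.

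Finally I would conclude as in Proposition~\ref{prp:rankrigid}. For $v\in A$ the maximal integral manifold $N$ through $v$ has dimension $2n-1=\dim U(TM)$, so the inclusion $N\hookrightarrow U(TM)$ is an immersion of equidimensional manifolds, i.e.\ $N$ is an open submanifold of $U(TM)$; since $\bar F$ is constant on $N$, we obtain $d(\bar F\lsz U(TM))_v=d(\bar F\lsz N)_v=0$. As $A$ is dense and $d(\bar F\lsz U(TM))$ is continuous, $d(\bar F\lsz U(TM))=0$ identically, and the homogeneity of $F$ and $\bar F$ then yields $d(F/\bar F)=0$, which is affine rigidity. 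I expect the main obstacle to be the density claim: the delicate point is that a lower-dimensional immersed leaf need not be nowhere dense (it may wind densely, as an irrational line does on a torus), so a Baire-category argument with nowhere dense sets fails; one must instead exploit that such leaves are \emph{null} and invoke the countability of the family to cover the putative open set $W$ by countably many null sets.
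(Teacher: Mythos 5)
Your proof is correct, but it takes a genuinely different and substantially heavier route than the paper's. The paper's own argument is a three-line pigeonhole: since $\bar F$ is constant on every integral manifold of $\ir D^h$ contained in $U(TM)$ (Corollary~\ref{crl:imDh}), and $U(TM)$ is covered by the countably many maximal integral manifolds, $\bar F$ attains at most countably many values on $U(TM)$; a continuous function with countably many values is constant on each connected component, and affine rigidity follows --- no appeal to Lemma~\ref{lmm:sdrank}, to measure theory, or to Proposition~\ref{prp:rankrigid} at all. You instead prove the stronger structural statement that the countability hypothesis forces the full-rank locus $\{\dim\ir D^h=2n-1\}$ to be open and dense in $U(TM)$ (a countable family of lower-dimensional immersed leaves consists of null sets and cannot cover an open piece of the $(2n-1)$-dimensional hypersurface), and then rerun the endgame of Proposition~\ref{prp:rankrigid}. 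This buys a genuine byproduct: hypothesis (3) of the final theorem essentially subsumes into hypothesis (2) --- and since the spanning fields of $\ir D^h$ are $1^+$-homogeneous, so that $\ir D^h$ is dilation invariant and $\dim\ir D^h$ is constant along rays, your density in $U(TM)$ even propagates to density in $\slt M$, literally reducing (3) to (2). The cost is one technical point you pass over silently: the null-set estimate for each leaf requires the maximal integral manifolds to be second countable (the immersed image of a non-Lindel\"of manifold need not have measure zero); this does hold for the orbit leaves in Michor's framework, but it deserves justification, whereas the paper's continuity argument sidesteps the issue entirely. Your closing observation that Baire category would fail because low-dimensional leaves can be dense is accurate and is precisely why your measure-theoretic substitute (or the paper's pigeonhole) is needed.
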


\begin{proof}
 Let $\bar F$ be a Finsler function for $M$ which has the same canonical spray as $F$. By Corollary~\ref{crl:imDh}, $\bar F$ is constant each integral manifold of $\ir D^h$ contained in $U(TM)$, thus $\bar F$ can have at most countably many different values on $U(TM)$. However, $\bar F$ is continuous, so this is possible only if $\bar F$ is constant on each component of $U(TM)$.
\end{proof}

\section{problems}
The following converse of Proposition~\ref{prp:rankrigid} is quite tempting:
\begin{quote}
{\it If $D^h$ has dimension less than $2n-1$ on an open subset of $\slt M$, then $(M,F)$ is not affinely rigid.}
\end{quote}

If $D^h$ has non-maximal dimension on an open subset, it can have (uncountably) many integral manifolds, which forces less rigidity on the Finsler functions that metrize the canonical spray. However, even if a smooth subspace field has non-maximal dimension on an open subset, it can still uniquely determine the functions that are constant on the integral manifolds. For example, there is a smooth subspace field $E$ on $\RR^2$ (endowed with the canonical coordinate system $(x,y)$) whose maximal integral manifolds are
\begin{enumerate}[(a)]
 \item the half-planes $y<0$ and $y>1$;
 \item the `vertical' line segments $\{(x,y)\in\RR^2\mid 0<y<1\}$, for each $x\in\RR$;
 \item each point of the straight lines given by $y=0$ or $y=1$.
\end{enumerate}
It is easy to see that the only continuous functions that are constant on each of these integral manifolds are the constant functions. Although, whether similar configuration can occur or not in the case of $\ir D^h$ is unknown.
 
For the sake of completeness, we show that there is indeed such a smooth subspace field $E$ on $\RR^2$. Let $\varphi,\psi\colon\RR\to\RR$ be smooth, nonnegative functions such that $\varphi(0)=\varphi(1) = 0$, and it is positive everywhere else, and $\psi$ vanishes on $[0,1]$, and it is positive everywhere else. Consider $\RR^2$ with its canonical coordinate system $(x,y)$, and consider the smooth subspace field $E$ spanned by the vector fields
\[
 X = (\psi\circ x)\frac{\partial}{\partial x},\qquad Y = (\varphi\circ y)\frac{\partial}{\partial y}.
\]
Then $E$ has dimension $2$ if $y>1$ or $y<0$, it has dimension $1$ if $0<y<1$, and has dimension $0$ if $y=0$ or $y=1$. Its integral manifolds are indeed the ones given above.

\section{Application}

We show how the results above can be used to characterize Finsler manifolds that admit no proper affinities. The following result is a direct analogue of \cite[p.~242, Lemma~1]{MR1393940}. 

\begin{lmm}\label{lmm:affhom}
  An affinity of a connected affinely rigid Finsler manifold is a homothety.
\end{lmm}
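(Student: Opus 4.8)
The plan is to manufacture, from a given affinity $\phi$ of $(M,F)$, a second Finsler function that shares the canonical spray of $F$, and then let affine rigidity finish the argument. Concretely, I would set $\bar F:=\phi^*F=F\circ T\phi$, where $T\phi\colon TM\to TM$ is the tangent map of $\phi$. The first task is to verify that $\bar F$ is again a Finsler function: since $T\phi$ restricts to a linear isomorphism on each fibre and $\phi$ is a diffeomorphism, $\bar F$ is smooth on $\slt M$, positively $1^+$-homogeneous (because $T\phi$ commutes with fibrewise scalar multiplication), and strongly convex (its fibrewise Hessian is the pullback of that of $F$ by a linear isomorphism, hence positive definite).

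The heart of the argument is to show that $\bar F$ has the \emph{same} canonical spray $S$ as $F$. Here I would use the characterization of an affinity as a map preserving geodesics as parametrized curves. By the definition of $\bar F$, the $\bar F$-length of a curve $\gamma$ equals the $F$-length of $\phi\circ\gamma$, so $\gamma$ is a geodesic of $\bar F$ if and only if $\phi\circ\gamma$ is a geodesic of $F$. Since both $\phi$ and $\phi\inv$ are affinities, $\phi$ maps $F$-geodesics bijectively onto $F$-geodesics, and therefore the geodesics of $\bar F$ coincide, as parametrized curves, with those of $F$. A spray is uniquely determined by its geodesics together with their parametrizations, whence $S_{\bar F}=S_F=S$.

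With this in hand the conclusion is immediate. By affine rigidity, $d(F/\bar F)=0$, so $F/\bar F$ is locally constant on $\slt M$. As $M$ is connected (and $\dim M\ge 2$, so that $\slt M$ is connected, the zero section having codimension $\ge 2$), the ratio $F/\bar F$ equals a single positive constant $c$, giving $\phi^*F=\bar F=c\inv F$. Thus $\phi$ scales the Finsler function by the constant factor $c\inv$, i.e.\ $\phi$ is a homothety.

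The main obstacle I anticipate is making the spray-preservation step fully precise: one must state carefully what it means for $\phi$ to preserve the canonical spray and check that ``identical parametrized geodesics'' really forces equality of the two sprays as vector fields on $\slt M$, rather than mere projective equivalence. This is exactly where the affine (parametrized) hypothesis is indispensable: had $\phi$ preserved only the \emph{unparametrized} geodesics, the sprays $S_F$ and $S_{\bar F}$ would differ by a projective term along the Liouville field, and affine rigidity could not be applied. Recording that the spray coefficients are recovered from the second derivatives of the parametrized geodesics pins down this point.
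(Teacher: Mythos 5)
Your proposal is correct and takes essentially the same route as the paper: both pull back $F$ by the affinity to obtain $\bar F=F\circ\varphi_*$, verify that $\bar F$ shares the canonical spray of $F$ (the paper by noting the spray of $\bar F$ is $\varphi_*{}_\#S$ and that affinity means $\varphi_*{}_\#S=S$, you equivalently via preservation of parametrized geodesics), and then invoke affine rigidity. Your added care in passing from the locally constant ratio $F/\bar F$ to a single constant via connectedness of $\slt M$ is a detail the paper leaves implicit.
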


\begin{proof}
 Let $(M,F)$ be such a Finsler manifold, $S$ its canonical spray. Let $\varphi\colon M\to M$ be an affinity, and consider the Finsler function $\tilde F:=F\circ\varphi_*$. Obviously, $\varphi$ is an isometry from $(M,F)$ to $(M,\tilde F)$, hence the canonical spray of $(M,\tilde F)$ is the push-forward $\varphi_*{}_\#S:=\varphi_{**}\circ S\circ\varphi_*\inv$. However, $\varphi$ is also an affine transformation of $(M,F)$, so we have $\varphi_*{}_\#S = S$. Thus $F$ and $\tilde F$ have the same canonical spray. Since $(M,F)$ is affinely rigid, $\tilde F=F\circ\varphi_*$ is a constant multiple of $F$, therefore $\varphi$ is a homothety.
\end{proof}

\begin{crl}\label{crl:affiso}
  The affinities and isometries of a connected forward complete affinely rigid Finsler manifold coincide.
\end{crl}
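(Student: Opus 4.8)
The plan is to prove the two inclusions separately. The easy direction is that every isometry preserves $F$, hence maps geodesics to geodesics as parametrized curves, so it is an affinity. The substance lies in the converse. Let $\varphi$ be an affinity of the connected affinely rigid Finsler manifold $(M,F)$. By Lemma~\ref{lmm:affhom}, $\varphi$ is a homothety, i.e. $F\circ\varphi_*=cF$ for some constant $c>0$; it then remains to show that forward completeness forces $c=1$.

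Suppose $c\neq 1$. Replacing $\varphi$ by $\varphi\inv$ if necessary (whose homothety ratio is $1/c$), I may assume $c<1$. Since $F(\varphi_*v)=cF(v)$, the map $\varphi$ scales the length of every curve by $c$, hence scales the forward distance: $d(\varphi(p),\varphi(q))=c\,d(p,q)$. For any $p$ the orbit $p_k:=\varphi^k(p)$ then satisfies $d(p_m,p_n)\le \frac{c^m}{1-c}\,d(p,\varphi(p))$ for $n>m$, so it is forward Cauchy; by forward completeness it converges to a point $p_0$, and continuity of $\varphi$ yields $\varphi(p_0)=p_0$. Thus a proper homothety with $c<1$ has an attracting fixed point.

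At $p_0$ set $A:=\varphi_{*,p_0}\colon T_{p_0}M\to T_{p_0}M$; then $F(Av)=cF(v)$, so $A=cB$ with $B$ a linear isometry of the Minkowski norm $F\lsz T_{p_0}M$, and consequently $A^k\to 0$. Because $\varphi$ preserves the canonical spray, it commutes with the geodesic flow, whence $\varphi\circ\exp_{p_0}=\exp_{p_0}\circ A$ near the origin; in exponential coordinates centred at $p_0$ the affinity $\varphi$ is therefore the linear contraction $A$. The idea is to exploit this self-similarity: the spray coefficients are invariant under $A$, while the iterates $A^k$ contract every neighbourhood of $p_0$ to $p_0$, and this should force the spray to coincide near $p_0$ with a flat (Minkowski) spray. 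Since $\varphi^k(q)\to p_0$ for every $q$, the same conclusion propagates to all of $M$, so the canonical spray is flat. A flat spray is metrized by every translate of $F\lsz T_{p_0}M$, and these are not mutually proportional, so the spray is not uniquely metrizable, contradicting affine rigidity. Hence $c=1$ and $\varphi$ is an isometry.

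I expect the main obstacle to be the third step: making precise the Finsler analogue of the classical fact that a complete manifold carrying a proper homothety must be flat. Concretely, one must show that invariance of the spray coefficients under the linear contraction $A$ — equivalently, the correct scaling behaviour of the Jacobi/Riemann curvature of the Berwald connection, which is homogeneous in the fibre variable — forces this curvature to vanish first at $p_0$ and then, via the contracting orbit, everywhere; and one must verify that a flat spray indeed fails to be uniquely metrizable. The asymmetry of the Finsler distance needs only minor care, since the fixed-point argument uses forward Cauchy sequences throughout.
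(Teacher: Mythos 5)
Your overall architecture matches the paper's: reduce via Lemma~\ref{lmm:affhom} to showing that a proper homothety is impossible on a forward complete, connected, affinely rigid manifold. The paper disposes of this second step in one line by citing Lovas and Szilasi \cite{MR2732642}: the only forward complete connected Finsler manifolds admitting proper homotheties are Minkowski vector spaces, and these are visibly not affinely rigid. You instead set out to re-prove that cited theorem from scratch, and the decisive step is exactly where your argument stops being a proof. Your fixed-point construction is fine (passing to $\varphi\inv$ to get $c<1$, the forward Cauchy orbit $\varphi^k(p)$, and the linearization $\varphi\circ\exp_{p_0}=\exp_{p_0}\circ A$), and it is indeed how such proofs begin. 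But ``invariance of the spray under the contraction $A$ forces flatness'' is not a routine scaling computation in the Finsler setting: the homothety scaling does force the flag (Jacobi) curvature to vanish along the contracting orbits, yet in Finsler geometry $K\equiv 0$ does not by itself make a spray locally Minkowski --- one must additionally control the Berwald curvature, or invoke an Akbar-Zadeh-type theorem which carries its own completeness hypotheses. That step is precisely the content of \cite{MR2732642}, and you explicitly leave it as an expected ``obstacle'' rather than closing it; as written, the proposal therefore has a genuine gap at its core.

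Two smaller points. First, ``a flat spray is metrized by every translate of $F\lsz T_{p_0}M$'' does not yield non-proportional metrizations: in linear coordinates an $x$-independent Minkowski norm coincides with all of its translates. What you need is $F\circ B$ for a linear automorphism $B$ that is not a similarity of the norm (or simply any second, non-proportional Minkowski norm); any such norm metrizes the flat spray. Second, to contradict affine rigidity you need two non-proportional Finsler functions on $M$ itself, so you must also show that $\exp_{p_0}$ identifies $M$ with $T_{p_0}M$ globally (forward completeness gives $\exp_{p_0}$ defined on all of $T_{p_0}M$, but injectivity and surjectivity require an argument); this global identification is part of the conclusion of \cite{MR2732642}, not automatic from local flatness plus $\varphi^k(q)\to p_0$. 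Both points are repairable, but together with the curvature gap they show that your route amounts to reconstructing the quoted theorem, whereas the paper's proof simply quotes it.
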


\begin{proof}
  From the previous lemma we know that the affinities of such a Finsler manifold are homotheties. However, the only forward complete connected Finsler manifolds admitting proper homotheties are the Minkowski vector spaces \cite{MR2732642}. But Minkowski vector spaces are clearly not affinely rigid, so our claim follows.
\end{proof}

To summarize, we obtain:
\begin{thm}
 Let $(M,F)$ be a connected Finsler manifold satisfying any of the following conditions:
 \begin{enumerate}[(1)]
  \item $\operatorname{Hol}_p$ acts transitively on the unit sphere $U(T_pM):=\{v\in T_pM, F(v)=1\}$;
  \item $\ir D^h$ has dimension $2n-1$ over a dense subset of $\slt M$;
  \item $U(TM):=F\inv(\{1\})$ contains countably many maximal integral manifolds of $\ir D^h$.
 \end{enumerate}
 Then any affine transformation of $(M,F)$ is a homothety. If $(M,F)$ is also forward complete, then any affine transformation of it is an isometry. 
\end{thm}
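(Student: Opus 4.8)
The plan is to recognize this theorem as a direct assembly of the results established above, so that essentially no new argument is required. The key observation is that each of the three listed conditions is exactly the hypothesis of one of the affine-rigidity criteria already proved: condition (1) is the hypothesis of Proposition~\ref{prp:trnzero}, condition (2) is the hypothesis of Proposition~\ref{prp:rankrigid}, and condition (3) is the hypothesis of the proposition asserting that a unit bundle containing countably many maximal integral manifolds of $\ir D^h$ forces affine rigidity. Hence the first step is simply to invoke the appropriate proposition and conclude that, under any one of the three hypotheses, $(M,F)$ is affinely rigid.

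Once affine rigidity is secured, the second step applies Lemma~\ref{lmm:affhom}: since $(M,F)$ is connected and affinely rigid, every affine transformation of $(M,F)$ is a homothety. This yields the first assertion of the theorem at once, uniformly across all three cases, because the reduction to affine rigidity has already absorbed the distinction between the hypotheses.

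For the concluding assertion I would add the forward-completeness hypothesis and appeal to Corollary~\ref{crl:affiso}, which states that on a connected, forward complete, affinely rigid Finsler manifold the affinities and isometries coincide. Thus under the extra completeness assumption every affine transformation is in fact an isometry, which finishes the proof.

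Since the entire argument is a matter of citing earlier results in the right order, I do not expect any genuine obstacle. The only point meriting a moment's care is to check that the three conditions are phrased in a form literally matching the hypotheses of the corresponding propositions --- in particular that the unit sphere $U(T_pM)$ and the unit bundle $U(TM)$ are written consistently with their earlier definitions, and that the dimension count $2n-1$ refers to $\dim M = n$ as before --- so that each citation applies verbatim and the three-way case split collapses cleanly into the single statement about homotheties and isometries.
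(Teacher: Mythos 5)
Your proposal is correct and matches the paper's own treatment exactly: the theorem is introduced there with ``To summarize, we obtain,'' i.e.\ it is proved precisely by invoking Proposition~\ref{prp:trnzero}, Proposition~\ref{prp:rankrigid}, and the countable-integral-manifolds proposition to get affine rigidity, then Lemma~\ref{lmm:affhom} for the homothety conclusion and Corollary~\ref{crl:affiso} for the forward complete case. No gaps; nothing further is needed.
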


\begin{rmrk}
 Some special cases of these results have been appeared in the literature. J.~Szenthe in \cite{MR1406352} considered the subspace field spanned by the vector fields $\vv[\xi,\eta]$ \footnote{$\vv$ is the projection with $\operatorname{ker}\vv = H\slt M$, $\operatorname{im}\vv = V\slt M$} where $\xi,\eta\in\vm_{H\slt M}$. He proved that if this subspace field has constant dimension $n-1$, then any affine transformation is a homothety. This is a special case of our result, because $\ir S(\vm_{H\slt M})$ is closed under Lie brackets by \cite[3.27 Lemma]{Michor}, and the dimension of $D^h$ is $2n-1$ if and only if the dimension of $\vv D^h$ is $n-1$. Similarly, in \cite{elgendi2016freedom} the authors considered the subspace field spanned by all the successive Lie brackets of the vector fields in $\vm_{H\slt M}$. They connected the codimension of this subspace field to the number of functionally independent Finsler functions that have the same canonical spray as $(M,F)$. As a special case they obtained that if the codimension is $1$, then the canonical spray is uniquely metrizable. Our Proposition~\ref{prp:rankrigid} is a direct generalization of this, because we consider a larger subspace field, thus it has a better chance to have the maximal dimension $2n-1$ (almost) everywhere.
\end{rmrk}

.

\end{document}